\newtheorem{theorem}{Theorem}[section]
\newtheorem{lemma}[theorem]{Lemma}
\newtheorem{corollary}{Corollary}
\theoremstyle{definition}
\newtheorem{definition}[theorem]{Definition}
\theoremstyle{remark}
\numberwithin{equation}{section}
\newcounter{thm}
\theoremstyle{plain}
\newtheorem{main_theorem}[thm]{Theorem}
\newcommand{\pl}[1]{\foreignlanguage{polish}{#1}}
\newcommand{\RR}{\mathbb{R}}
\newcommand{\ZZ}{\mathbb{Z}}
\newcommand{\NN}{\mathbb{N}}
\newcommand{\EE}{\mathbb{E}}
\newcommand{\calF}{\mathcal{F}}
\newcommand{\calM}{\mathcal{M}}
\newcommand{\dnu}{{\: \rm d}\nu}
\newcommand{\ind}[1]{{\mathds{1}_{{#1}}}}
\newcommand{\abs}[1]{{\lvert {#1} \rvert}}
\newcommand{\vnorm}[1]{{\left\lVert {#1} \right\rVert}}
\begin{document}
\selectlanguage{english}

\title[An elementary proof]
{The maximal function and conditional square function control the variation: \\ An elementary proof}

\author{Kevin Hughes}
\address{
		Kevin Hughes\\
		School of Mathematics\\
        The University of Edinburgh\\
	    James Clerk Maxwell Building\\
	    The King's Buildings\\
	    Mayfield Road\\
	    Edinburgh\\
	    EH9 3JZ\\
		UK}
\email{khughes3@staffmail.ed.ac.uk}

\author{Ben Krause}
\address{
		Ben Krause\\
		UCLA Math Sciences Building\\
        Los Angeles\\
	    CA 90095-1555\\
		USA}
\email{benkrause23@math.ucla.edu}

\author{Bartosz Trojan}
\address{
	Bartosz Trojan\\
	Instytut Matematyczny\\
	Uniwersytet \pl{Wroc{\lll}awski}\\
	Pl. Grun\-waldzki 2/4\\
	50-384 \pl{Wroc{\lll}aw}\\
	Poland}
\email{trojan@math.uni.wroc.pl}

\subjclass[2010]{Primary 60G42, 60E15; Secondary 47B38, 46N30}

\date{October 30, 2014}

\begin{abstract}
	In this note we prove the following good-$\lambda$ inequality, for $r>2$, all $\lambda > 0$,
	$\delta \in \big(0, \frac{1}{2} \big)$
	\[
		C \cdot \nu\big\{ V_r(f) > 3 \lambda ; \calM(f) < \delta \lambda\big\}
		\leq 
		\nu\{s(f) > \delta \lambda\} + \frac{\delta^2}{(r-2)^2} \cdot
		\nu\big\{ V_r(f) > \lambda\big\},
	\]
	where $\calM(f)$ is the martingale maximal function, $s(f)$ is the conditional martingale square
	function, $C > 0$ is (absolute) constant. This immediately proves that $V_r(f)$ is bounded on
	$L^p$, $1 < p <\infty$ and moreover is integrable when the maximal function and the conditional square
	function are.
\end{abstract}

\maketitle

\section{Introduction}
Let $(X, \calF, \nu)$ be $\sigma$-finite measure space with a filtration $\big(\calF_n : n \in \ZZ\big)$, i.e
$\big(\calF_n : n \in \ZZ\big)$ is a sequence of $\sigma$-fields such that
$\calF_n \subseteq \calF_{n+1} \subseteq \calF$. For a martingale $\big(f_n : n \in \ZZ\big)$ we define
the maximal function, the square function
\[
	\calM(f) = \sup_{n \in \ZZ} \abs{f_n}, \quad
	S(f) = \Big(\sum_{n \in \ZZ} \abs{f_n - f_{n-1}}^2\Big)^{1/2},
\]
and a conditional square function
\[
	s(f) = \Big(\sum_{n \in \ZZ} \EE\big[\abs{f_n - f_{n-1}}^2 \big| \calF_{n-1}\big] \Big)^{1/2}.
\]
For a dyadic filtration on $\RR^d$ we have $S(f) = s(f)$ since the square of a martingale difference 
$\abs{f_n - f_{n-1}}^2$ is $\calF_{n-1}$-measurable. It is well known (see \cite[Theorem 9]{burk1} 
and \cite[Theorem 1]{dav}, see also \cite[Theorem 1.1]{bdg}) that for each $p \in [1, \infty)$ there
exists $C_p > 0$ such that
\begin{equation}
	\label{eq:3}
	C_p^{-1} \vnorm{\calM(f)}_{L^p(\nu)} \leq \vnorm{S(f)}_{L^p(\nu)} 
	\leq C_p \vnorm{\calM(f)}_{L^p(\nu)}.
\end{equation}
Also, by convexity Lemma (see \cite[Theorem 3.2]{bdg}) for each $p \in [2, \infty)$ there is $C_p > 0$ satisfying
\[
	\vnorm{s(f)}_{L^p(\nu)} \leq C_p \vnorm{S(f)}_{L^p(\nu)}.
\]
For $p \in (0, 2)$, we have
\[
	\vnorm{\calM f}_{L^p(\nu)} \leq C_p \vnorm{s(f)}_{L^p(\nu)}.
\]
In general, the conditional square function $s$ is not necessary bounded on $L^p(\Omega, \nu)$, for $p \in (1, 2)$. Let
us recall that the filtration $\big(\calF_n : n \in \ZZ\big)$ is regular if there is $R \geq 1$ such that for all
nonnegative martingales $(g_n : n \in \ZZ)$,
\[
	g_n \leq R g_{n-1}.
\]
If the filtration $\big(\calF_n : n \in \ZZ\big)$ is regular then \eqref{eq:3} is valid for all $p \in (0, \infty)$; moreover, for all $p \in (0, \infty)$ there is $C_p > 0$ such
that
\[
	C_p^{-1} \vnorm{s(f)}_{L^p(\nu)} \leq \vnorm{S(f)}_{L^p(\nu)} \leq C_p \vnorm{s(f)}_{L^p(\nu)}.
\]
Another family of operators which measure oscillation are the \emph{$r$-variation} 
operators defined for $r \geq 1$ by
\[
	V_r(f)
	= 
	\sup_{n_0 < n_1 < \ldots < n_J} 
	\Big( \sum_{j = 1}^J 
	\abs{f_{n_j} - f_{n_{j-1}}}^r \Big)^{1/r}.
\]
These variation operators are more difficult to control than the maximal function $\calM$. In fact, for any
$n_0 \in \ZZ$, one may pointwise dominate
\[
	\calM(f) \leq V_r(f) + |f_{n_0}|,
\]
where $r \geq 1$ is arbitrary. We further remark that the variation operators become larger 
(more sensitive to oscillation) as $r$ \emph{decreases}. The fundamental boundedness result
concerning the $r$-variation operators is due to L\'{e}pingle.
\begin{theorem}[\cite{le}]
	For each $p \in [1, \infty)$ there is $A_p > 0$ such that for all $f \in L^p(X, \nu)$
	and $r > 2$
	\[
		\big\lVert V_r(f) \big\rVert_{L^p(\nu)} 
		\leq 
		A_p \frac{r}{r-2} \vnorm{f}_{L^p(\nu)}, \quad (p > 1),
	\]
	and for all $\lambda > 0$
	\[
		\nu\big\{V_r(f) > \lambda \big\}
		\leq
		A_1
		\frac{r}{r-2} 
		\lambda^{-1} 
		\vnorm{f}_{L^1(\nu)}, \quad (p = 1).
	\]
\end{theorem}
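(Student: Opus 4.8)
The plan is to derive the theorem from the good-$\lambda$ inequality recorded in the abstract by the classical integration-and-absorption scheme, treating the regimes $p \geq 2$ and $p < 2$ separately. Fix $r > 2$ and a martingale $f$. To license the absorption step below I would first work with the variation restricted to chains $n_0 < \cdots < n_J$ drawn from a finite window of indices, for which $\vnorm{V_r(f)}_{L^p(\nu)}$ is finite a priori; the stated bound is then recovered by letting the window exhaust $\ZZ$ and invoking monotone convergence. Using $\{V_r(f) > 3\lambda\} \subseteq \{V_r(f) > 3\lambda;\ \calM(f) < \delta\lambda\} \cup \{\calM(f) \geq \delta\lambda\}$, applying the good-$\lambda$ inequality to the first event, and then multiplying by $p\lambda^{p-1}$ and integrating against $\mathrm{d}\lambda$ (via $\int_0^\infty p\lambda^{p-1}\nu\{g > \lambda\}\,\mathrm{d}\lambda = \vnorm{g}_{L^p(\nu)}^p$ and the scalings in $3\lambda$ and $\delta\lambda$), one obtains
\[
	3^{-p}\vnorm{V_r(f)}_{L^p(\nu)}^p
	\leq
	\delta^{-p}\vnorm{\calM(f)}_{L^p(\nu)}^p
	+ \frac{1}{C}\delta^{-p}\vnorm{s(f)}_{L^p(\nu)}^p
	+ \frac{1}{C}\frac{\delta^2}{(r-2)^2}\vnorm{V_r(f)}_{L^p(\nu)}^p.
\]

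Choosing $\delta = \min\{c_p(r-2),\ \tfrac14\}$ with $c_p$ small enough that $\frac{1}{C}\frac{\delta^2}{(r-2)^2} \leq \frac{1}{2}\,3^{-p}$, I can absorb the last term into the left-hand side; this choice gives $\delta^{-1} \lesssim_p \frac{r}{r-2}$ (using $\frac{1}{r-2} \leq \frac{r}{r-2}$ when $\delta = c_p(r-2)$, and $\frac{r}{r-2} \geq 1$ when $\delta = \tfrac14$), so that
\[
	\vnorm{V_r(f)}_{L^p(\nu)}
	\lesssim_p
	\frac{r}{r-2}\Big(\vnorm{\calM(f)}_{L^p(\nu)} + \vnorm{s(f)}_{L^p(\nu)}\Big).
\]
For $p \in [2,\infty)$ this already gives the strong estimate: Doob's maximal inequality yields $\vnorm{\calM(f)}_{L^p(\nu)} \lesssim_p \vnorm{f}_{L^p(\nu)}$, while the convexity lemma and \eqref{eq:3} yield $\vnorm{s(f)}_{L^p(\nu)} \lesssim_p \vnorm{S(f)}_{L^p(\nu)} \lesssim_p \vnorm{\calM(f)}_{L^p(\nu)} \lesssim_p \vnorm{f}_{L^p(\nu)}$, whence $\vnorm{V_r(f)}_{L^p(\nu)} \leq A_p\,\frac{r}{r-2}\,\vnorm{f}_{L^p(\nu)}$.

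The main obstacle is the remaining range $p \in (1,2)$ together with the weak-type endpoint $p = 1$: since the conditional square function $s(f)$ is not bounded on $L^p(\nu)$ for $p < 2$, and in particular is not of weak type $(1,1)$, the reduction above cannot be pushed below $p = 2$. To reach these cases I would establish the weak-type $(1,1)$ bound independently and then interpolate. For the weak-type bound I would discard $s(f)$ in favour of the unconditional square function $S(f)$, which is of weak type $(1,1)$, and run a Gundy/Calder\'on--Zygmund splitting $f = g + b$ at height $\lambda$: the good part is estimated by Chebyshev against the $L^2$ bound already proved, while the bad part is controlled through the subadditivity $V_r(f) \leq V_r(g) + V_r(b)$ together with the weak-type $(1,1)$ controls on $\calM(b)$ and $S(b)$. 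The delicate point is to track the factor $\frac{r}{r-2}$ faithfully through this splitting (a naive execution loses a power); granting the weak $(1,1)$ inequality, Marcinkiewicz interpolation between it and the strong $L^2$ bound delivers the strong $L^p$ estimate for $1 < p < 2$, completing the proof.
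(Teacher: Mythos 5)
Your proposal should be measured against what the paper actually does with this statement: the theorem is quoted from L\'epingle \cite{le}, and the paper never reproves it in full. What the paper's own machinery (Theorem \ref{thm:1} plus ``integrating distribution functions'') yields is precisely the inequality you derive in your first half,
\[
	\vnorm{V_r(f)}_{L^p(\nu)}
	\lesssim_p
	\frac{r}{r-2}\Big(\vnorm{\calM(f)}_{L^p(\nu)} + \vnorm{s(f)}_{L^p(\nu)}\Big),
\]
and hence the strong $L^p$ bound for $p \geq 2$, where $\vnorm{s(f)}_{L^p(\nu)} \lesssim_p \vnorm{f}_{L^p(\nu)}$ is available through the convexity lemma, \eqref{eq:3}, and Doob's inequality. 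That half of your argument is correct and carefully done: the finite-window truncation that licenses absorption, the choice $\delta \simeq \min\{c_p(r-2), \tfrac14\}$, and the bookkeeping that keeps the constant at $r/(r-2)$ are all right, and this is exactly the route the paper intends.

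The gap is the rest of the stated theorem: the range $p \in (1,2)$ and the weak type $(1,1)$ endpoint are asserted but not proven. You correctly diagnose why the good-$\lambda$ inequality cannot reach them in general --- $s(f)$ fails to be bounded on $L^p(\nu)$ for $p < 2$ --- but the substitute you offer (Gundy splitting plus Marcinkiewicz interpolation) is a sketch whose central difficulty you name and then leave unresolved. That difficulty is real, not cosmetic: in the splitting $f = g + b$ at height $\lambda$, the good part $g$ satisfies $\vnorm{g}_{L^2(\nu)}^2 \lesssim \lambda \vnorm{f}_{L^1(\nu)}$, and estimating it by Chebyshev against the $L^2$ variational bound squares the constant, giving $\nu\{V_r(g) > \lambda\} \lesssim (r/(r-2))^2\, \lambda^{-1} \vnorm{f}_{L^1(\nu)}$ rather than the first power demanded by the theorem; interpolation then propagates this loss to every $p$ near $1$. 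Repairing it is not a matter of tracking constants more carefully within your scheme --- one essentially has to redo the endpoint through the jump inequalities (Theorem \ref{classic}, summed over scales as in Bourgain \cite{bou} and Pisier--Xu \cite{px}), an argument of a different nature from anything in your write-up. So, as a proof of the full quoted statement, the proposal is incomplete; what it establishes completely is the case $p \geq 2$, which is also all that the paper's own elementary method gives for a general (non-regular) filtration.
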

We remark that the range of $r$ in the above theorem is sharp, since these estimates can fail
for $r \leq 2$, (see e.g. \cite{jw, q}).

By now, comparatively simple proofs of L\'{e}pingle's theorem can be found in Pisier and Xu
\cite{px} and Bourgain \cite{bou} (see also \cite{jsw}). The idea was to leverage known estimates
for jump inequalities to recover variational estimates. Let us recall that the number of
$\lambda$-jumps, denoted by $N_\lambda(f)$, is equal to the supremum over $J \in \NN$ such that
there is an increasing sequence $n_0 < n_1 < \ldots < n_J$ satisfying
\[
	\abs{f_{n_j} - f_{n_{j-1}}} > \lambda
\]
for all $1 \leq j \leq J$. The key result concerning $\lambda$-jumps is the following theorem.
\begin{theorem}[\cite{bou, px}]
	\label{classic}
	For each $p \in [1, \infty)$ there exist $B_p > 0$ such that for all $f \in L^p(X, \nu)$
	and $\lambda > 0$
	\[ 
		\big\| 
		\lambda N_\lambda^{1/2}(f) 
		\big\|_{L^p(\nu)}
		\leq B_p 
		\|f\|_{L^p(\nu)}, \quad (p > 1),
	\]
	and
	\[ 
		\nu\big\{ 
		\lambda N_\lambda^{1/2}(f) > t
		\big\}
		\leq 
		B_1 t^{-1} \|f\|_{L^1(\nu)}, \quad (p = 1),
	\]
	for any $t > 0$.
\end{theorem}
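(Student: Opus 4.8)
The plan is to dominate $\lambda N_\lambda^{1/2}(f)$ pointwise by the square function of a single \emph{sampled} martingale, and then to read off both estimates from the square function bounds already recorded in \eqref{eq:3} together with Doob's maximal inequality. The one point that genuinely requires care is that a naive greedy sampling at height $\lambda$ \emph{undercounts} the jumps: an individual trajectory of a martingale may drift monotonically, so stopping at the first $\lambda$-crossing can destroy jumps that a cleverly chosen admissible sequence still sees. The remedy is to sample at the smaller height $\lambda/2$, which is the whole purpose of the combinatorial step below.

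First I would fix $\lambda>0$ and, after reducing to a finite index set by monotone convergence, let $\mu_0$ be the least index and define
\[
	\mu_{k+1} = \inf\big\{ n > \mu_k : \abs{f_n - f_{\mu_k}} > \lambda/2 \big\}.
\]
By optional sampling $\tilde f_k := f_{\mu_k}$ is a martingale for $\big(\calF_{\mu_k}\big)$; each increment with $\mu_k < \infty$ obeys $\abs{\tilde f_k - \tilde f_{k-1}} > \lambda/2$, and on every block $\mu_k \le n < \mu_{k+1}$ one has $\abs{f_n - f_{\mu_k}} \le \lambda/2$. The key claim is the pointwise bound $N_\lambda(f) \le \#\{ k \ge 1 : \mu_k < \infty\}$. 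Indeed, if $n_0 < n_1 < \cdots < n_J$ is admissible and two consecutive points $n_{j-1}, n_j$ lie in the same block, then the triangle inequality forces $\abs{f_{n_j} - f_{n_{j-1}}} \le \lambda$, contradicting $\abs{f_{n_j} - f_{n_{j-1}}} > \lambda$; hence the indices of the blocks containing $n_0, \ldots, n_J$ are strictly increasing, there are at least $J+1$ blocks, and therefore $J \le \#\{ k \ge 1 : \mu_k < \infty\}$.

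Combining this with $\abs{\tilde f_k - \tilde f_{k-1}} > \lambda/2$ on the finite jumps gives
\[
	\lambda^2 N_\lambda(f) \le 4 \sum_{k \ge 1} \abs{\tilde f_k - \tilde f_{k-1}}^2 = 4\, S(\tilde f)^2,
\]
that is, $\lambda N_\lambda^{1/2}(f) \le 2\, S(\tilde f)$. For $p>1$ I would then invoke the right-hand inequality of \eqref{eq:3} applied to $\tilde f$, followed by $\calM(\tilde f) \le \calM(f)$ and Doob's maximal inequality:
\[
	\big\lVert \lambda N_\lambda^{1/2}(f)\big\rVert_{L^p(\nu)}
	\le 2 \vnorm{S(\tilde f)}_{L^p(\nu)}
	\le C_p \vnorm{\calM(\tilde f)}_{L^p(\nu)}
	\le C_p \vnorm{\calM(f)}_{L^p(\nu)}
	\le B_p \vnorm{f}_{L^p(\nu)}.
\]
For the endpoint $p=1$ I would use that the martingale square function is of weak type $(1,1)$; applied to $\tilde f$, together with the contraction $\vnorm{\tilde f}_{L^1(\nu)} = \sup_k \EE\abs{f_{\mu_k}} \le \vnorm{f}_{L^1(\nu)}$ (conditional Jensen and optional sampling), this yields
\[
	\nu\big\{ \lambda N_\lambda^{1/2}(f) > t\big\}
	\le \nu\big\{ S(\tilde f) > t/2 \big\}
	\le B_1 t^{-1} \vnorm{f}_{L^1(\nu)},
\]
uniformly in $\lambda$.

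The substantive steps are thus the combinatorial reduction, whose sole role is to repair the undercounting by halving the sampling height, and the weak-$(1,1)$ bound for $S$, which is the genuine endpoint obstacle (the $p>1$ case being immediate once the reduction is in place). If one prefers not to quote the latter, it can be produced by a Davis/Calder\'on--Zygmund splitting $\tilde f = g + h$ with $\vnorm{g}_{L^2(\nu)}^2 \lesssim t\, \vnorm{\tilde f}_{L^1(\nu)}$ and $\vnorm{S(h)}_{L^1(\nu)} \lesssim \vnorm{\tilde f}_{L^1(\nu)}$, after which Chebyshev on the $L^2$ part (using the orthogonality $\vnorm{S(g)}_{L^2(\nu)} = \vnorm{g}_{L^2(\nu)}$) and Chebyshev on the $L^1$ part close the estimate.
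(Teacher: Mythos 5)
Your proof is correct. Note, however, that the paper does not prove this theorem at all: it is imported wholesale from Bourgain \cite{bou} and Pisier--Xu \cite{px}, so there is no internal proof to compare against. What you have written is essentially the standard stopping-time argument (it is close in spirit to Bourgain's treatment, and to the martingale jump lemma in \cite{jsw}): sample the martingale greedily at height $\lambda/2$, observe that any two admissible jump points falling in the same block would have increment at most $\lambda$, conclude the pointwise bound $\lambda N_\lambda^{1/2}(f) \leq 2 S(\tilde f)$ for the sampled martingale $\tilde f$, and then invoke square-function estimates together with optional sampling and Doob. Your two substantive points are both well taken: greedy sampling at height $\lambda$ genuinely undercounts (e.g.\ the path $0, -0.9\lambda, 0.9\lambda$ produces no $\lambda$-crossing from the starting point but one admissible $\lambda$-jump), so the halved threshold is necessary; and the endpoint $p=1$ cannot be read off from \eqref{eq:3} as stated, since Davis' inequality bounds $\vnorm{S(f)}_{L^1(\nu)}$ by $\vnorm{\calM(f)}_{L^1(\nu)}$, which is useless for $f$ merely in $L^1$ --- one needs the weak-type $(1,1)$ inequality for the martingale square function (Burkholder/Gundy), which you correctly identify as the real endpoint input. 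The only soft spot is your closing parenthetical: the two-piece splitting $\tilde f = g + h$ with $\vnorm{g}_{L^2(\nu)}^2 \lesssim t \vnorm{\tilde f}_{L^1(\nu)}$ and $\vnorm{S(h)}_{L^1(\nu)} \lesssim \vnorm{\tilde f}_{L^1(\nu)}$ is an oversimplification of Gundy's decomposition, which requires three pieces (the part carrying the large jumps is controlled only through the measure of its support, not through $\vnorm{S(\cdot)}_{L^1(\nu)}$); since this is offered only as an alternative to quoting the classical weak-type bound, it does not affect the validity of your proof, but as written that sketch would not close.
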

The goal of this note is to provide a new and elementary proof of L\'{e}pingle's result. 
The significance of our approach is that it sheds new insight into the relationship between maximal
function, conditional square function, and variation operator. Specifically, we prove the following
theorem.
\begin{main_theorem}
	\label{thm:1}
	There is $C > 0$ such that for all $\delta \in \big(0, \frac{1}{2}\big)$, $r > 2$ and
	$\lambda > 0$
	\begin{equation}
		\label{eq:1}
		C \cdot \nu\big\{V_r(f) > 3 \lambda; \calM(f) \leq \delta \lambda\big\}
		\leq
		\nu\big\{s(f) > \delta \lambda \big\}
		+ \frac{\delta^2}{(r-2)^2} \cdot
		\nu\big\{V_r(f) > \lambda \big\}.
	\end{equation}
\end{main_theorem}
In particular, by integrating distribution functions we obtain that for all $p \in (0, \infty)$ and
$r > 2$, $V_r(f) \in L^p(X, \nu)$ whenever $s(f)$ and $\calM(f)$ are in $L^p(X, \nu)$.

\subsection{Acknowledgments}
The authors wish to thank Konrad Kolesko, Christoph Thiele, and Jim Wright for helpful discussions,
and Michael Lacey and Terence Tao for their careful reading and support. Furthermore, this work
was completed during the ``Harmonic Analysis and Partial Differential Equations'' trimester program
at Hausdorff Research Institute for Mathematics; the authors wish to thank HIM for their
hospitality.

\section{The Proof}
We begin with a preliminary lemma.
\begin{lemma}
	\label{weak}
	There is $C > 0$ such that for any $A \in \calF_m$, all $\lambda > 0$, and
	$\delta \in \big(0, \frac{1}{2}\big)$
	\[
		\nu \big\{A; V_r(f) > \lambda; \calM(f) \leq \delta \lambda \big\}
		\leq C \lambda^{-2} (r-2)^{-2} \int_A \abs{f}^2 \dnu
	\]
	for each $f \in L^2(X, \nu)$ satisfying 
	\[
		f_n \cdot \ind{A} = 0
	\]
	for all $n \leq m$.
\end{lemma}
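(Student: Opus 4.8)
The plan is to deduce the estimate from a single quantitative input, the $L^2$ case of the jump inequality (Theorem~\ref{classic}), after two reductions: a localization that turns $\int_A \abs{f}^2 \dnu$ into a global $L^2$-norm, and a dyadic decomposition of the jumps in which the hypothesis $r>2$ is used to weight the scales. \emph{Localization:} first I would pass from $f$ to $g_n := f_n \ind{A}$. Because $A \in \calF_m$, for $n \geq m$ one has $\EE[f_{n+1}\ind{A} \mid \calF_n] = \ind{A}\EE[f_{n+1}\mid\calF_n] = \ind{A} f_n$, while for $n \leq m$ the hypothesis $f_n \ind{A}=0$ gives $g_n = 0$; hence $(g_n)$ is again a martingale, with terminal value $f\ind{A}$, so that $\vnorm{g}_{L^2(\nu)}^2 = \int_A \abs{f}^2 \dnu$. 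Since $g_n = f_n$ pointwise on $A$, the maximal function, the variation, and the number of $\lambda$-jumps of $g$ agree with those of $f$ on $A$, in particular on $E := \{A; V_r(f)>\lambda; \calM(f)\leq\delta\lambda\}$. Thus it is enough to bound $\nu(E)$ in terms of $\vnorm{g}_{L^2(\nu)}^2$, and this is exactly the step that replaces $\int_X$ by $\int_A$ on the right-hand side.

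\emph{Dyadic decomposition and the role of $r>2$.} On $E$ every jump obeys $\abs{g_{n_j}-g_{n_{j-1}}} \leq 2\calM(g) \leq 2\delta\lambda < \lambda$. Fixing a near-optimal sequence for $V_r(g)$ and sorting its jumps into the dyadic bands $(b_{k+1}, b_k]$ with $b_k = 2^{-k}\cdot 2\delta\lambda$, the number $M_k$ of jumps in band $k$ is at most $N_{b_{k+1}}(g)$. Splitting $b_k^r = b_k^{r-2} b_k^2$ and inserting $\lambda^r < \sum_{k\geq0} b_k^r M_k$ turns the defining inequality into the pointwise statement, valid on $E$,
\[
	(2\delta)^{-r} < \sum_{k \geq 0} 2^{-kr} M_k,
\]
where $r>2$ guarantees that the weights $2^{-kr}$ decay fast enough for the small scales to be summable, and the factor $(2\delta)^{-r}$ records that the jumps are much smaller than $\lambda$.

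\emph{Pigeonholing the scales.} I would then distribute the deficit: with a geometric weight $2^{-k\beta}$, $0<\beta<r-2$, normalized to sum to $1$, the displayed inequality forces, at each point of $E$, a single index $k$ with $N_{b_{k+1}}(g) \geq M_k \gtrsim (2\delta)^{-r}(1-2^{-\beta}) 2^{k(r-\beta)}=:\tau_k$. This covers $E$ by the level sets $E_k = \{N_{b_{k+1}}(g) > \tau_k\}$, and Chebyshev together with $\int_X N_{b_{k+1}}(g)\dnu \leq B_2^2 b_{k+1}^{-2}\vnorm{g}_{L^2(\nu)}^2$ bounds $\nu(E_k)$; summing the resulting geometric series in $k$ (convergent precisely because $\beta < r-2$) yields
\[
	\nu(E) \leq C B_2^2 \lambda^{-2} \frac{(2\delta)^{r-2}}{(1-2^{-\beta})(1-2^{-(r-2-\beta)})} \int_A \abs{f}^2 \dnu.
\]
Choosing $\beta = (r-2)/2$ balances the two geometric factors, each comparable to $(r-2)^{-1}$, and produces the asserted $(r-2)^{-2}$.

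\emph{Main obstacle.} The one genuinely delicate point is the constant. Two separate geometric summations---one to distribute the deficit across scales, one to add up the measures of the level sets---are what create the two powers of $(r-2)^{-1}$; keeping the constant absolute requires balancing $\beta$ against $r-2$ and using the constraint $\calM(f)\leq\delta\lambda$ (equivalently, that the jumps never reach the scale $\lambda$) to start the dyadic sum below $\lambda$ and secure its convergence. The localization and the dyadic bookkeeping are routine; the only quantitative ingredient is the $L^2$ jump inequality of Theorem~\ref{classic}, invoked once per scale.
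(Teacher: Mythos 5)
Your proposal is correct and follows essentially the same route as the paper: localize to the martingale $g = f \cdot \ind{A}$, dominate $V_r(g)^r$ by dyadically weighted jump counts (your band decomposition is exactly Bourgain's pointwise bound $\big(V_r(g)\big)^r \leq \sum_{l} 2^{rl} N_{2^l}(g)$, restricted to small scales via $\calM(f) \leq \delta\lambda$), pigeonhole with geometric weights at the balanced exponent $\beta = (r-2)/2$ (the paper's choice $s=(r+2)/2$), and apply the $L^2$ jump inequality of Theorem~\ref{classic} scale by scale, with the two geometric sums producing the $(r-2)^{-2}$. The only differences are cosmetic: you verify the localization explicitly and re-derive the pointwise domination rather than citing \cite[\S 3]{bou}.
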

\begin{proof}
	By homogeneity, it suffices to prove the result with $\lambda = 1$. We can pointwise dominate 
	the variation as in \cite[\S 3]{bou}
	\[ 
		\big(V_r(f)\big)^r \leq \sum_{l \in \ZZ} 2^{rl} N_{2^l}(f).
	\]
	Let $s = (r+2)/2$. Since $\calM(f) < \delta < 1/2$, the above sum runs over $l \leq 0$, which
	leads to the containment
	\[
		\big\{A; V_r(f) > 1; \calM(f) < \delta \big\} 
		\subset
		\Big\{A; \sum_{l \leq 0} 2^{rl} N_{2^l}(f) > 1 \Big\}
		\subset
		\bigcup_{l \leq 0}
		\left\{ 2^{sl} N_{2^l}(g) > c_r \right\},
	\]
	where $g = f \cdot \ind{A}$ and
	\[
		c_r^{-1} = \frac{1}{2} \sum_{l \leq 0} 2^{(r-2) l/2}.
	\]
	Let us observe that $c_r = o(r-2)$. In light of Theorem \ref{classic}, this immediately
	leads to the majorization
	\footnote{We write $X \lesssim Y$, or $Y \gtrsim X$ to denote the estimate $X \leq C Y$ for an absolute
	constant $C > 0$.}
	\[
		\nu\big\{A; V_r(f) > 1; \calM(f) < \delta \big\} 
		\lesssim
		c_r^{-1}
		\sum_{l \leq 0} 2^{(s-2)l} \int_A \abs{f}^2 \dnu
		\lesssim (r-2)^{-2} \int_A \abs{f}^2 \dnu.
	\] 
\end{proof}

\begin{proof}[Proof of Theorem \ref{thm:1}] 
	By homogeneity, it will suffice to prove \eqref{eq:1} for $\lambda = 1$.

	Let $B = \{s(f) > \delta\}$, $B^* = \big\{\calM \big(\ind{B}\big) > 1/2 \big\}$ and 
	$G = \big(B^*\big)^c$. By Doob's inequality, we have
	\[
		\nu\big(B^*\big) \lesssim \int \abs{\ind{B}}^2\dnu = \nu\{s(f) > \delta\}.
	\]
	Therefore, it is enough to show that there is $C > 0$ such that for all $\delta \in (0, \frac{1}{2})$ and
	any $N \in \NN$
	\[
		\nu\big\{V_r(f_n : -N \leq n ) > 3; \calM(f) < \delta; G\big\}
		\leq
		C \delta^2 \cdot \nu\{V_r(f) > 1\}.
	\]
	Let $\sigma$ be a stopping time defined to be equal to the minimal $m \in \ZZ$ such that
	\[
		V_r\big(f_n : -N \leq n \leq m\big) > 1.
	\]
	Notice, that on the set $\big\{V_r(f_n : -N \leq n) > 3\big\}$, we have $-N \leq \sigma < \infty$.
	Next,
	\[
		V_r(f_n : -N \leq n) \leq V_r\big(f_n - f_{n \land \sigma} : -N \leq n \big) + 2 \calM(f)
		+ V_r\big(f_{n \land \sigma} : -N \leq n < \sigma\big),
	\]
	thus for $g = f - f_\sigma$ we have
	\[
		\big\{V_r(f_n : -N \leq n) > 3; \calM(f) < \delta; G\big\}
		\subseteq
		\big\{V_r(g) > 1; \calM(g) < 2 \delta; G\big\}.
	\]
	We are going to prove that for each $m \in \ZZ$
	\[
		\nu\big\{
			V_r(g) > 1; \calM(g) < 2 \delta; G; \sigma = m
		\big\}
		\lesssim
		\delta^2 \cdot
		\nu\{\sigma = m\}.
	\]
	For $n \in \ZZ$ we define $U_n =\{x: \EE[\ind{B} | \calF_n](x) < 1/2\}$. We notice that,
	if $x \in G$ then $x \in U_n$ for all $n \in \ZZ$. Let
	\[
		\tilde{g}(x) = \sum_{n \in \ZZ} \big(g_n(x) - g_{n-1}(x)\big) \cdot \ind{U_{n-1}}(x).
	\]
	We observe that $g_k(x) = \tilde{g}_k(x)$ for all $x \in G$ and $k \in \ZZ$. Indeed,
	$\big(g_n - g_{n-1}\big) \cdot \ind{U_{n-1}}$ is $\calF_n$-measurable and
	\[
		\EE\big[ \big(g_n - g_{n-1}\big) \cdot \ind{U_{n-1}} \big| \calF_{n-1} \big] 
		= 0.
	\]
	Thus for $x \in G$ we have
	\[
		\tilde{g}_k(x)
		=
		\sum_{n \leq k} \big(g_n(x) - g_{n-1}(x)\big) \cdot \ind{U_{n-1}}(x)
		=g_k(x).
	\]
	Therefore, we obtain 
	\begin{align*}
		\nu\big\{V_r(g) > 1; \calM(g) < 2\delta; G; \sigma = m\big\}
		&=
		\nu\big\{V_r(\tilde{g}) > 1; 
		\calM (\tilde{g}) < 2\delta; G; \sigma = m\big\}\\
		&\leq
		\nu\big\{V_r(\tilde{g}) > 1; \calM (\tilde{g}) < 2\delta; \sigma = m\big\}.
	\end{align*}
	Because $\tilde{g} = 0$ on the set $\{\sigma = m\}$, by Lemma \ref{weak} we conclude
	\[
		\nu\big\{V_r(\tilde{g}) > 1; \calM(\tilde{g}) < 2\delta; \sigma = m\big\}
		\lesssim
		(r-2)^{-2}
		\int_{\{\sigma = m\}}
		\abs{\tilde{g}}^2 \dnu.
	\]
	Next, since $s$ preserves $L^2$-norm thus
	\[
		\int_{\{\sigma = m \}}
		\abs{\tilde{g}}^2 \dnu
		=
		\int
		s\big(\tilde{g} \cdot \ind{\{\sigma = m\}}\big)^2 \dnu
		=
		\sum_{n \in \ZZ}
		\int_{\{\sigma = m\}}
		\EE\big[\abs{g_n - g_{n-1}}^2 \big| \calF_{n-1} \big] \cdot \ind{U_{n-1}} \dnu.
	\]
	Since $\ind{U_{n-1}} \leq 2 \cdot \EE[\ind{B^c} | \calF_{n-1}]$ we get
	\begin{align*}
		\int_{\{\sigma = m\}} \abs{\tilde{g}}^2 \dnu
		& \leq
		2
		\sum_{n \in \ZZ}
		\int_{\{\sigma = m\}} 
		\EE\big[\abs{g_n - g_{n-1}}^2 \big| \calF_{n-1}\big]
		\cdot
		\EE[\ind{B^c} | \calF_{n-1}]  \dnu \\
		& =
		2
		\int_{\{\sigma = m\}}
		s(f)^2 \cdot \ind{B^c} \dnu
	\end{align*}
	which is bounded by $2 \delta^2 \cdot \nu\{\sigma = m\}$.
\end{proof}

\section{Applications to dyadic $A_\infty$-weights}
We remark that in the case of the dyadic filtration on $\RR^d$, the proof generalizes to handle 
measures given by $w$, dyadic $A_\infty$-weights. First, let us recall the following definition.
\begin{definition}
	A non-negative locally integrable function $w$ belongs to \emph{dyadic $A_\infty$}, 
	if for every $\epsilon > 0$ there exists $\gamma >  0$ so that for every dyadic interval $I$
	and any measurable set $E \subset I$, if $\abs{E} \leq \gamma \cdot \abs{I}$ then
	\begin{equation}
		\label{eq:2}
		w(E) \leq \epsilon w(I).
	\end{equation}
	If additionally, there is $C > 0$ such that for all dyadic intervals $I$
	\[
		C^{-1} w(I_l) \leq w(I_r) \leq C w(I_l),
	\]
	where $I_l$ and $I_r$ are, respectively, left and right children of $I$, then $w$ is called
	dyadic doubling.
\end{definition}
\begin{corollary}
	Let $w$ be a dyadic $A_\infty$-weight. There exist $C > 0$ so that for each $\epsilon > 0$
	and $r > 2$ there is $\delta > 0$ such that for all $\lambda > 0$ 
	\[
		w\big\{V_r(f) > 3\lambda; \calM(f) < \delta\lambda\big\}
		\leq
		C \cdot w\{S(f) > \delta \lambda\} + \epsilon \cdot w\{V_r(f) > \lambda\}.
	\]
\end{corollary}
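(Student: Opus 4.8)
The plan is to rerun the proof of Theorem~\ref{thm:1} almost verbatim, with Lebesgue measure $\nu$ replaced by the weight $w$, using that $S(f) = s(f)$ for the dyadic filtration so that the term $w\{S(f) > \delta\lambda\}$ plays the role of $\nu\{s(f) > \delta\lambda\}$. The martingale ingredients --- the stopping time $\sigma$, the splitting $V_r(f_n) \leq V_r(f_n - f_{n\land\sigma}) + 2\calM(f) + V_r(f_{n\land\sigma})$, the passage to $g = f - f_\sigma$, and the replacement of $g$ by $\tilde g$ (which agrees with $g$ on $G$) --- are purely martingale-theoretic and survive unchanged, since the conditional expectations defining them are taken with respect to the underlying dyadic (Lebesgue) structure. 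By homogeneity I again reduce to $\lambda = 1$ and to proving, for each $N$, that $w\{V_r(f_n : -N \leq n) > 3; \calM(f) < \delta; G\} \leq \epsilon\, w\{V_r(f) > 1\}$, after which monotone convergence in $N$ and the splitting over $B^*$ and $G$ conclude. The weight $w$ enters only at two points, and at each of them I would invoke the defining property \eqref{eq:2} of dyadic $A_\infty$.

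First step (the Doob term). Here I would set $B = \{S(f) > \delta\}$ and $B^* = \{\calM(\ind B) > \beta\}$ for a threshold $\beta = \beta(w) \in (0,1)$ close to $1$, rather than $\beta = 1/2$. Writing $B^*$ as the union of its maximal dyadic cubes $Q_i$, each satisfies $\abs{Q_i \setminus B} < (1-\beta)\abs{Q_i}$; choosing $\beta$ so that $1 - \beta \leq \gamma$, where $\gamma$ is produced by \eqref{eq:2} for $\epsilon = 1/2$, yields $w(Q_i \setminus B) \leq \tfrac12 w(Q_i)$, hence $w(Q_i) \leq 2\, w(B \cap Q_i)$. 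Summing over $i$ gives the weighted Doob bound $w(B^*) \leq 2\, w(B) = 2\, w\{S(f) > \delta\}$, which accounts for the absolute constant $C = 2$ in the corollary. The only cost is that the auxiliary threshold $\beta$, and hence the constant $(1-\beta)^{-1}$ appearing below through $\ind{U_{n-1}} \leq (1-\beta)^{-1}\EE[\ind{B^c} \mid \calF_{n-1}]$, now depends on $w$; this is harmless, being a fixed finite constant.

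Second step (the main term), which I expect to be the crux. The obstruction is that Lemma~\ref{weak} is an $L^2(\nu)$ statement built on the unweighted jump inequality of Theorem~\ref{classic}, and no $L^2(w)$ analogue is available for a mere $A_\infty$ weight. I would circumvent this by localizing Lemma~\ref{weak} to a single dyadic stopping cube. Fix a maximal cube $Q$ on which $\sigma = m$; since $\tilde g_n \ind Q = 0$ for $n \leq m$, Lemma~\ref{weak} applies with $A = Q$ and, running the same $L^2$-isometry computation as in Theorem~\ref{thm:1} but integrated only over $Q$, gives
\[
	\abs{\{x \in Q : V_r(\tilde g) > 1; \calM(\tilde g) < 2\delta\}}
	\leq
	C(r-2)^{-2}(1-\beta)^{-1}\delta^2 \abs{Q} =: \eta\, \abs{Q},
\]
so the exceptional set has small \emph{Lebesgue} density inside $Q$. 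Now, given the target $\epsilon > 0$, let $\gamma = \gamma(\epsilon)$ be the constant furnished by \eqref{eq:2} and choose $\delta = \delta(\epsilon, r, w)$ small enough that $\eta \leq \gamma$; then \eqref{eq:2} converts the Lebesgue smallness into $w\{x \in Q : V_r(\tilde g) > 1; \calM(\tilde g) < 2\delta\} \leq \epsilon\, w(Q)$. Summing over the disjoint stopping cubes $Q$, whose union is $\{\sigma < \infty\} \subseteq \{V_r(f) > 1\}$, produces $\epsilon\, w\{V_r(f) > 1\}$, exactly the second term of the corollary. The subtlety to watch is that the gain $\delta^2/(r-2)^2$ --- which in Theorem~\ref{thm:1} survives as an explicit coefficient --- must here be spent entirely on forcing $\eta$ below the $A_\infty$ threshold $\gamma(\epsilon)$, so $\delta$ is no longer free but is dictated by $\epsilon$, $r$, and the $A_\infty$ character of $w$; this is precisely the trade recorded in the statement of the corollary.
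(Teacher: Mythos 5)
Your proposal is correct and follows essentially the same route as the paper: the paper likewise reuses the Lebesgue-measure estimate $\lvert\{V_r(f)>3;\,\calM(f)<\delta;\,G;\,\sigma=m\}\rvert \leq C\delta^2(r-2)^{-2}\,\lvert\{\sigma=m\}\rvert$ from the proof of Theorem~\ref{thm:1}, chooses $\delta$ so small that $C\delta^2/(r-2)^2\leq\gamma(\epsilon)$, and then applies \eqref{eq:2} on the ($\calF_m$-measurable, hence dyadic-cube-decomposable) set $\{\sigma=m\}$ to obtain $\epsilon\cdot w\{\sigma=m\}$ --- exactly your cube-by-cube conversion of Lebesgue density into $w$-smallness. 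The only difference is that you also spell out the weighted Doob bound $w(B^*)\leq 2\,w(B)$ by raising the maximal-function threshold to some $\beta$ close to $1$; the paper keeps the threshold $1/2$ and leaves that step implicit, so your treatment is a careful completion of, not a departure from, the paper's argument.
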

\begin{proof}
	Using the notation as in the proof of Theorem \ref{thm:1} we may write
	\[
		\big\{V_r(f) > 3; \calM(f) < \delta; G; \sigma = m \big\}
		\subseteq 
		\{\sigma = m\}
	\]
	and
	\[
		\big\lvert \big\{
			V_r(f) > 3; \calM(f) < \delta; G; \sigma = m
		\big\} \big\rvert
		\leq
		C \frac{\delta^2}{(r-2)^2} \cdot
		\big\lvert \big\{
		\sigma = m 
		\big\} \big\rvert.
	\]
	Given $\epsilon > 0$ we take $\delta > 0$ small enough so that
	$C \frac{\delta^2}{(r-2)^2} \leq \gamma$. Then, by \eqref{eq:2} we get
	\[
		w\big\{V_r(f) > 3; \calM(f) < \delta; G ; \sigma = m \big\}
		\leq
		\epsilon \cdot w\big\{\sigma = m \big\}.
	\]
	Since for the dyadic filtration $S(f) = s(f)$ we conclude the proof.
\end{proof}

Again, by integrating distribution functions for each $p \in [1, \infty)$ and $r > 2$ we can find 
$C_{p,r} > 0$ such that
\[
	C_{p,r} \vnorm{V_r(f)}_{L^p(w)} \leq \vnorm{S(f)}_{L^p(w)} + \vnorm{\calM(f)}_{L^p(w)}.
\]
By \cite[\S 2]{wil}, for each $w$, a dyadic $A_\infty$-weight there is $C_p > 0$
\[
	\vnorm{\calM(f)}_{L^p(w)} \leq C_p \vnorm{S (f)}_{L^p(w)},
\]
so the square function alone dominates $V_r$ in $L^p(w)$. In the case where $w$ is a dyadic
doubling, we have the reverse inequality as well 
\[
	\vnorm{S(f)}_{L^p(w)} \leq C_p^{-1} \vnorm{\calM(f)}_{L^p(w)},
\]
and thus the maximal function alone dominates $V_r$ in $L^p(w)$.

%    Bibliographies can be prepared with BibTeX using amsplain,
%    amsalpha, or (for "historical" overviews) natbib style.
\bibliographystyle{amsplain}

\end{document}